\documentclass[review]{elsarticle}
\usepackage{amsmath}
\usepackage{graphicx,setspace}
\usepackage{amsfonts,amsmath, amssymb, amsthm}
\usepackage{mathrsfs}
\usepackage{epstopdf}
\usepackage{float}
\usepackage{lineno,hyperref}
\usepackage{color}
\usepackage{subfigure}
\usepackage{bm}
\usepackage{graphicx}
\usepackage{amssymb, amsthm}
\usepackage{mathrsfs}
\usepackage{epstopdf}
\usepackage{float}
\usepackage{lineno,hyperref}
\usepackage{color}
\usepackage{subfigure}
\usepackage{cases}

\modulolinenumbers[5]
\numberwithin{equation}{section}
\oddsidemargin 0in
\evensidemargin 0in
\topmargin -0.6in
\textwidth 6.5in
\textheight 9in

\journal{}

\begin{document}

\newtheorem{definition}{Definition}
\newtheorem{lemma}{Lemma}
\newtheorem{remark}{Remark}
\newtheorem{theorem}{Theorem}
\newtheorem{proposition}{Proposition}
\newtheorem{assumption}{Assumption}
\newtheorem{example}{Example}
\newtheorem{corollary}{Corollary}
\def\e{\varepsilon}
\def\Rn{\mathbb{R}^{n}}
\def\Rm{\mathbb{R}^{m}}
\def\E{\mathbb{E}}
\def\hte{\bar\theta}
\def\cC{{\mathcal C}}
\numberwithin{equation}{section}

\begin{frontmatter}

\title{Effective Approximation for a Stochastic System with Large Potential}

\author{\bf\normalsize{
Ao Zhang\footnote{School of Mathematics and Statistics, \& Center for Mathematical Sciences,  Huazhong University of Sciences and Technology, Wuhan 430074,  China. Email: \texttt{zhangao1993@hust.edu.cn}},
Jinqiao Duan\footnote{Department of Applied Mathematics, Illinois Institute of Technology, Chicago, IL 60616, USA. Email: \texttt{duan@iit.edu}}
}}

\begin{abstract}
This letter is about effective approximation for a stochastic parabolic equation with a large potential in a periodic medium. Under a condition on the spectral properties of the associated cell problem, we prove that the solution can be approximately factorized as the product of a fast oscillating cell eigenfunction and of a solution of an effective equation.
\end{abstract}

\begin{keyword}
Homogenization; Stochastic parabolic equation; Multiplicative noise; Two-scale convergence.
\end{keyword}

\end{frontmatter}

\section{Introduction}

There is a vast literature on periodic and quasi-periodic homogenization of partial differential equations. Contrasted with deterministic homogenization, very few results are available for homogenization of stochastic partial differential equations \cite{AJ20, JW14, Wa07, WD07, BM16}. In some circumstances, randomness has to be taken into account and it often occurs through a random potential. In this letter, we consider the effective approximation for a stochastic parabolic equation with periodic coefficients, by Bloch wave theory and two-scale convergence. The Bloch transform is a generalization of Fourier transform that leaves invariant periodic functions \cite{Al04, Al05, Co97}. The method of two-scale convergence is a powerful tool for studying homogenization problems for partial differential equations with periodically oscillating coefficients \cite{Al92, Ngu98}. The theory of the two-scale convergence from the periodic to the stochastic setting was considered by Bourgeat, Mikeli\'c and Wright \cite{BMW94}, using techniques from ergodic theory.

We study the following parabolic equation with multiplicative noise
\begin{equation}\label{Equ.1}
\begin{cases}
\frac{\partial u_{\varepsilon}}{\partial t} - {\rm div}(\sigma(\frac{x}{\varepsilon})\nabla u_{\varepsilon})+(\varepsilon^{-2}c(\frac{x}{\varepsilon})+d(x,\frac{x}{\varepsilon}))u_{\varepsilon}+g(\frac{x}{\varepsilon})u_{\varepsilon}\frac{dW(t)}{dt}=0  \quad \text{in} \ D\times [0,T], \\
u_\varepsilon=0 \quad \text{on} \ \partial D\times [0,T], \\
u_\varepsilon(0,x)=u^0_{\varepsilon}(x)  \quad \text{in} \ D,
\end{cases}
\end{equation}
where $D\subset\mathbb{R}^N$ is an open domain, $0<T<\infty$, and the unknown $u_{\varepsilon}(t, x)$ is a function from $[0, T]\times D$ into $\mathbb{C}$. The matrix $\sigma(y)$ is real and bounded function defined for $y\in {\mathbb{T}^N}$ (the unit torus), and the coefficients $c(y)$, $d(x, y)$ and $g(y)$ are real and bounded functions defined for $x\in D$ and $y\in {\mathbb{T}^N}$. Furthermore, the real-valued Wiener processes $W(t)$ is defined on the complete probability space $(\Omega, \mathcal{F}, \mathbb{P})$ endowed with the canonical filtration $(\mathcal{F}_t)_{t\in[0, T]}$.

We are interested in the behavior of the solution $u_{\varepsilon}(t, x, \omega)$ as scale parameter $\varepsilon\to 0$. We introduce a cell problem,
\begin{equation}\nonumber
-(\text{div}_y+2i\pi\theta)\left(\sigma(y)(\nabla_y+2i\pi\theta)\psi_n\right)+c(y)\psi_n=\lambda_n(\theta)\psi_n \quad \text{in}\ \mathbb{T}^N,
\end{equation}
where $\theta\in\mathbb{T}^N$ is a parameter and $(\lambda_n(\theta), \psi_n(y, \theta))$ is the $n^{th}$ eigenpair. Under some assumptions, we focus on higher energy initial data and consider well-prepared initial data of the type
\begin{equation}
u_{\varepsilon}^0(x)=\psi_n(\frac{x}{\varepsilon},\theta^n)e^{2i\pi\frac{\theta^n\cdot x}{\varepsilon}}v^0(x).
\end{equation}
We shall prove in Theorem 1 that the solution of (\ref{Equ.1}) satisfies
\begin{equation}
u_{\varepsilon}(t, x, \omega)\approx e^{i\frac{\lambda_n(\theta^n)t}{\varepsilon^2}}e^{2i\pi\frac{\theta^n\cdot x}{\varepsilon}}\psi_n(\frac{x}{\varepsilon},\theta^n)v(t, x, \omega),
\end{equation}
where $v(t, x, \omega)$ is the unique solution of an effective homogenized stochastic parabolic equation.

The letter is organized as follows. In Section 2, we define the functional spaces, make some assumptions, and introduce some results on Bloch theory and two-scale convergence. In Section 3, we state and prove our effective approximation result.

\section{Preliminaries}
Recall that $L^2(D)$ denotes the complex $L^2$ space with usual scalar product $(\cdot,\cdot)$ and norm $\|\cdot\|$ , and $H^1(D)$ denotes the Sobolev space $H^1$.

We make the following assumptions on this stochastic parabolic equation (\ref{Equ.1}).
\par
{\bf Hypothesis  H.1. }The coefficients $\sigma(y)$, $c(y)$ and $g(y)$ are real measurable bounded periodic functions, i.e. their entries belong to $L^{\infty}(\mathbb{T}^N)$, while $d(x, y)$ is real measurable and bounded with respect to $x$, and periodic continuous with respect to $y$, i.e. its entries belong to $L^{\infty}(D; C(\mathbb{T}^N))$.

\par
{\bf Hypothesis  H.2. }The matrix $ \sigma $ is symmetric and uniformly coercive, i.e. there exists $\nu > 0$ such that for a.e. $y\in \mathbb{T}^N$,
\begin{equation}\nonumber
\sigma(y)\xi\cdot\xi\geqslant\nu|\xi|^2, \quad \text{for all}\ \xi\in \mathbb{R}^N.
\end{equation}
\subsection{Bloch Spectrum}
We recall the so-called Bloch spectral cell equation
\begin{equation}\label{Cell}
-(\text{div}_y+2i\pi\theta)\left(\sigma(y)(\nabla_y+2i\pi\theta)\psi_n\right)+c(y)\psi_n=\lambda_n(\theta)\psi_n \quad \text{in}\ \mathbb{T}^N,
\end{equation}
which, as a compact self-adjoint complex-valued operator on $L^2(\mathbb{T}^N)$, admits a countable sequence of real increasing eigenvalues $(\lambda_n)_{n\geqslant 1}$ and normalized eigenfunctions $(\psi_n)_{n\geqslant 1}$ with $\|\psi_n\|_{L^2(\mathbb{T}^N)}=1$. The dual parameter $\theta$ is called the Bloch frequency and it runs in the dual cell of $\mathbb{T}^N$, i.e. by periodicity it is enough to consider $\theta\in\mathbb{T}^N$.

In the sequel, we shall consider an energy level $n\geq 1$ and a Bloch parameter $\theta^n\in\mathbb{T}^N$ such that the eigenvalue $\lambda_n(\theta^n)$ satisfies some assumptions. Depending on these precise assumptions we obtain different homogenized limits for the stochastic parabolic equation (\ref{Equ.1}).
\par
{\bf Hypothesis  H.3. }
$\lambda_n(\theta^n)$  is a simple eigenvalue; $\theta^n$ is a critical point of $\lambda_n(\theta)$, i.e. $\nabla_{\theta}\lambda_n(\theta^n)=0$.

Under Hypothesis  H.3., it is a classical matter to prove that the $n^{th}$ eigenpair of (\ref{Cell}) is smooth in a neighborhood of $\theta^n$\cite{Ka66}. Introducing the operator $\mathbb{A}_n(\theta)$ defined on $L^2(\mathbb{T}^N)$ by
\begin{equation}\label{Oper.1}
\mathbb{A}_n(\theta)\psi=-(\text{div}_y+2i\pi\theta)\left( \sigma(y)(\nabla_y+2i\pi\theta)\psi\right)+c(y)\psi-\lambda_n(\theta)\psi.
\end{equation}
Denoting by $(e_k)_{1\leqslant k\leqslant N}$ the standard basis of $\mathbb{R}^N$ and by
$(\theta_k)_{1\leqslant k\leqslant N}$ the components of $\theta$, the first derivative of (\ref{Cell}) satisfies
\begin{equation}\label{Oper.2}
\mathbb{A}_n(\theta)\frac{\partial\psi_n}{\partial\theta_k}=2i\pi e_k\sigma(y)(\nabla_y+2i\pi\theta)\psi_n+(\text{div}_y+2i\pi\theta)(\sigma(y)2i\pi e_k\psi_n)+\frac{\partial\lambda_n}{\partial\theta_k}(\theta)\psi_n,
\end{equation}
and the second derivative of (\ref{Cell}) satisfies
\begin{equation}\label{Oper.3}
\begin{split}
\mathbb{A}_n(\theta)\frac{\partial^2\psi_n}{\partial\theta_k\partial\theta_l}=&2i\pi e_k\sigma(y)(\nabla_y+2i\pi\theta)\frac{\partial\psi_n}{\partial\theta_l}+(\text{div}_y+2i\pi\theta)\left(\sigma(y)2i\pi e_k\frac{\partial\psi_n}{\partial\theta_l}\right)\\
&+2i\pi e_l\sigma(y)(\nabla_y+2i\pi\theta)\frac{\partial\psi_n}{\partial\theta_k}+(\text{div}_y+2i\pi\theta)\left(\sigma(y)2i\pi e_l\frac{\partial\psi_n}{\partial\theta_k}\right)\\
&+\frac{\partial\lambda_n}{\partial\theta_k}(\theta)\frac{\partial\psi_n}{\partial\theta_l}+\frac{\partial\lambda_n}{\partial\theta_l}(\theta)\frac{\partial\psi_n}{\partial\theta_k}\\
&-4\pi^2e_k\sigma(y)e_l\psi_n-4\pi^2e_l\sigma(y)e_k\psi_n+\frac{\partial^2\lambda_n}{\partial\theta_k\partial\theta_l}(\theta)\psi_n.
\end{split}
\end{equation}
Under Hypothesis  H.3., the equations (\ref{Oper.2}) and (\ref{Oper.3}) simplify for $\theta=\theta^n$ and we find
\begin{equation}
\frac{\partial\psi_n}{\partial\theta_k}=2i\pi\zeta_k, \quad  \frac{\partial^2\psi_n}{\partial\theta_k\partial\theta_l}=-4\pi^2\chi_{kl},
\end{equation}
where $\zeta_k$ is the solution of
\begin{equation}\label{SolutionZeta}
\mathbb{A}_n(\theta^n)\zeta_k=e_k\sigma(y)(\nabla_y+2i\pi\theta^n)\psi_n+(\text{div}_y+2i\pi\theta^n)(\sigma(y)e_k\psi_n) \quad\text{in}\ \mathbb{T}^N,
\end{equation}
and $\chi_{kl}$ is the solution of
\begin{equation}\label{SolutionChi}
\begin{split}
\mathbb{A}_n(\theta^n)\chi_{kl}=&e_k\sigma(y)(\nabla_y+2i\pi\theta^n)\zeta_l+(\text{div}_y+2i\pi\theta^n)(\sigma(y)e_k\zeta_l)\\
&+e_l\sigma(y)(\nabla_y+2i\pi\theta^n)\zeta_k+(\text{div}_y+2i\pi\theta^n)(\sigma(y)e_l\zeta_k)\\
&+e_k\sigma(y)e_l\psi_n+e_l\sigma(y)e_k\psi_n-\frac{1}{4\pi^2}\frac{\partial^2\lambda_n}{\partial\theta_k\partial\theta_l}(\theta^n)\psi_n \quad\text{in}\ \mathbb{T}^N.
\end{split}
\end{equation}
There exists a unique solution of (\ref{SolutionZeta}), up to the addition of a multiple of $\psi_n$. Indeed, the right hand side of (\ref{SolutionZeta}) satisfies the required compatibility condition (i.e. it is orthogonal to $\psi_n$) because $\zeta_k$ is just a multiple of the partial derivative of $\psi_n$ with respect to $\theta$ which necessarily exists. Similarly, there exists a unique solution of (\ref{SolutionChi}), up to the addition of a multiple of $\psi_n$. The compatibility condition of (\ref{SolutionChi}) yields a formula for the Hessian matrix $\nabla_{\theta}\nabla_{\theta}\lambda_n(\theta^n)$, see \cite{Al04, Al05}.

\subsection{Two-Scale Convergence}
We now summarize several results about the two-scale convergence that we will need. For the results stated without proofs, see \cite{Al92, BMW94, BM16}. We denote by $C_{\#}(\mathbb{T}^N)$ the space of functions from $C(\bar{\mathbb{T}}^N)$ that have $\mathbb{T}^N$-periodic boundary values.
\begin{definition}
We say that a sequence $u_{\varepsilon}\in L^2(\Omega\times[0,T]\times D)$ two-scale converges to $u\in L^2(\Omega\times[0,T]\times D\times\mathbb{T}^N)$, if for every $\Psi\in L^2(\Omega\times[0,T]\times D;C_{\#}(\mathbb{T}^N))$ we have
\begin{equation}\nonumber
\begin{split}
\lim_{\varepsilon\to 0}\int_{\Omega}\int^T_0\int_Du_{\varepsilon}(\omega, t, x)&\Psi(\omega, t, x, \frac{x}{\varepsilon})dxdtd\mathbb{P}=\int_{\Omega}\int^T_0\int_D\int_{\mathbb{T}^N}u(\omega, t, x, y)\Psi(\omega, t, x, y)dydxdtd\mathbb{P}.\\
\end{split}
\end{equation}
\end{definition}

The following propositions are of great importance in obtaining the homogenization result.
\begin{proposition}
Assume that the sequence $u_{\varepsilon}$ is uniformly bounded in $L^2(\Omega\times[0,T]\times D)$. Then exists a subsequence, still denoted by $u_{\varepsilon}$, and a limit $u_0(\omega, t, x, y)\in L^2(\Omega\times[0,T]\times D\times\mathbb{T}^N)$ such that $u_{\varepsilon}$ two-scale converges to $u_0$.
\end{proposition}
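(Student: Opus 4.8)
The plan is to view the two-scale integrals of Definition 1 as a uniformly bounded family of linear functionals on a separable test space, to extract a convergent subsequence by a diagonal argument, and then to represent the limit functional by an $L^2$ function. Set $M=\sup_{\varepsilon}\|u_\varepsilon\|_{L^2(\Omega\times[0,T]\times D)}<\infty$ and, for $\Psi\in L^2(\Omega\times[0,T]\times D;C_\#(\mathbb{T}^N))$, define
\[
L_\varepsilon(\Psi)=\int_\Omega\int_0^T\int_D u_\varepsilon(\omega,t,x)\,\Psi\big(\omega,t,x,\tfrac{x}{\varepsilon}\big)\,dx\,dt\,d\mathbb{P}.
\]
First I would note that $(\omega,t,x)\mapsto\Psi(\omega,t,x,x/\varepsilon)$ is measurable and that the elementary pointwise bound $|\Psi(\omega,t,x,x/\varepsilon)|\le\|\Psi(\omega,t,x,\cdot)\|_{C_\#(\mathbb{T}^N)}$ combined with the Cauchy--Schwarz inequality gives $|L_\varepsilon(\Psi)|\le M\,\|\Psi\|_{L^2(\Omega\times[0,T]\times D;C_\#(\mathbb{T}^N))}$, uniformly in $\varepsilon$.

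Next I would sharpen this bound in the limit. Since the oscillation occurs only through the argument $x/\varepsilon$ while $(\omega,t)$ plays the role of a passive parameter, Fubini's theorem together with the classical mean value property (see \cite{Al92,BMW94}), applied to the function $|\Psi|^2\in L^1(\Omega\times[0,T]\times D;C_\#(\mathbb{T}^N))$, yields
\[
\lim_{\varepsilon\to0}\int_\Omega\int_0^T\int_D\big|\Psi\big(\omega,t,x,\tfrac{x}{\varepsilon}\big)\big|^2\,dx\,dt\,d\mathbb{P}=\int_\Omega\int_0^T\int_D\int_{\mathbb{T}^N}|\Psi(\omega,t,x,y)|^2\,dy\,dx\,dt\,d\mathbb{P},
\]
whence $\limsup_{\varepsilon\to0}|L_\varepsilon(\Psi)|\le M\,\|\Psi\|_{L^2(\Omega\times[0,T]\times D\times\mathbb{T}^N)}$. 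The space $L^2(\Omega\times[0,T]\times D;C_\#(\mathbb{T}^N))$ is separable (as $C_\#(\mathbb{T}^N)$ is a separable Banach space and the underlying measure space is separable and $\sigma$-finite), so I would fix a countable dense subset $\{\Psi_j\}_{j\ge1}$ and, using that each scalar sequence $(L_\varepsilon(\Psi_j))_\varepsilon$ is bounded, extract by a Cantor diagonal procedure a subsequence (not relabeled) along which $L_\varepsilon(\Psi_j)$ converges for every $j$. The uniform bound of the first step then propagates this, by density, to $L_\varepsilon(\Psi)\to L(\Psi)$ for every $\Psi$ in the full space, with $L$ linear and, by the previous display, satisfying $|L(\Psi)|\le M\,\|\Psi\|_{L^2(\Omega\times[0,T]\times D\times\mathbb{T}^N)}$.

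To finish, I would use that $L^2(\Omega\times[0,T]\times D;C_\#(\mathbb{T}^N))$ is dense in $L^2(\Omega\times[0,T]\times D\times\mathbb{T}^N)$ (because $C_\#(\mathbb{T}^N)$ is dense in $L^2(\mathbb{T}^N)$) to extend $L$ uniquely to a bounded linear functional on the full product space; the Riesz representation theorem (applied to the associated conjugate-linear functional, so as to match the non-conjugated pairing of Definition 1) then produces $u_0\in L^2(\Omega\times[0,T]\times D\times\mathbb{T}^N)$ with $L(\Psi)=\int_\Omega\int_0^T\int_D\int_{\mathbb{T}^N}u_0\Psi\,dy\,dx\,dt\,d\mathbb{P}$, which is precisely the statement that $u_\varepsilon$ two-scale converges to $u_0$ along the chosen subsequence. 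I expect the only delicate point to be the second step --- the joint measurability of $\Psi(\omega,t,x,x/\varepsilon)$ and the mean value property for merely $L^2$-in-$(\omega,t,x)$, continuous-in-$y$ integrands; since there is no averaging in the probabilistic variable, this reduces, via Fubini and dominated convergence, to the standard deterministic result of \cite{Al92,BMW94}, and the remaining functional-analytic steps are routine.
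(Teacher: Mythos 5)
The paper does not prove this proposition itself; it states it and defers to \cite{Al92} and \cite{BMW94}. Your argument is precisely the classical compactness proof from those references --- uniform bound on the functionals $L_\varepsilon$, the mean-value/admissibility property for $L^2(\Omega\times[0,T]\times D;C_\#(\mathbb{T}^N))$ test functions to sharpen the bound to the $L^2(\Omega\times[0,T]\times D\times\mathbb{T}^N)$ norm, diagonal extraction over a countable dense family, and Riesz representation --- correctly adapted by treating $(\omega,t)$ as passive parameters (and correctly noting the separability of the canonical Wiener space and the conjugation issue in the complex pairing), so it is correct and takes the same route the paper relies on.
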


\begin{proposition}
Assume that the sequence $u_{\varepsilon}$ is uniformly bounded in $L^2(\Omega\times[0,T]\times D)$, and the sequence $\varepsilon\nabla u_{\varepsilon}$ is also uniformly bounded in $L^2(\Omega\times[0,T]\times D)^N$. Then there exists a subsequence, still denoted by $u_{\varepsilon}$, and a limit $u_0(\omega, t, x, y)\in L^2(\Omega\times[0,T]\times D;H^1(\mathbb{T}^N))$ such that $u_{\varepsilon}$ two-scale converges to $u_0$ and $\varepsilon\nabla u_{\varepsilon}$ two-scale converges to $\nabla_yu_0$.
\end{proposition}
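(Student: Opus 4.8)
The plan is to bootstrap from the basic compactness statement (Proposition 1) and then identify the two-scale limit of $\varepsilon\nabla u_\varepsilon$ by an integration by parts that transfers the macroscopic gradient onto a microscopic divergence of the oscillating test function. First I would apply Proposition 1 to the bounded sequence $u_\varepsilon$, obtaining a subsequence (not relabelled) and a limit $u_0\in L^2(\Omega\times[0,T]\times D\times\mathbb{T}^N)$ with $u_\varepsilon$ two-scale converging to $u_0$. Since $\varepsilon\nabla u_\varepsilon$ is also uniformly bounded in $L^2(\Omega\times[0,T]\times D)^N$, a second application of Proposition 1 (componentwise, along this subsequence) yields a further subsequence and a limit $\xi_0\in L^2(\Omega\times[0,T]\times D\times\mathbb{T}^N)^N$ such that $\varepsilon\nabla u_\varepsilon$ two-scale converges to $\xi_0$. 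The content of the proposition then reduces to the identification $\xi_0=\nabla_y u_0$ together with the regularity $u_0\in L^2(\Omega\times[0,T]\times D;H^1(\mathbb{T}^N))$.

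To obtain this I would test against a smooth, $\mathbb{T}^N$-periodic (in $y$) vector field $\Psi$ with compact support in $(t,x)\in(0,T)\times D$, for instance $\Psi\in L^2(\Omega;C^\infty_c((0,T)\times D;C^\infty_\#(\mathbb{T}^N)))^N$. Using the chain rule $\nabla_x[\Psi(\omega,t,x,x/\varepsilon)]=(\nabla_x\Psi)(\omega,t,x,x/\varepsilon)+\varepsilon^{-1}(\nabla_y\Psi)(\omega,t,x,x/\varepsilon)$ and integrating by parts in $x$ (the support condition removes boundary terms), one gets
\begin{equation}\nonumber
\int_\Omega\!\int_0^T\!\int_D \varepsilon\nabla u_\varepsilon\cdot\Psi\Big(\omega,t,x,\tfrac{x}{\varepsilon}\Big)\,dx\,dt\,d\mathbb{P}
=-\int_\Omega\!\int_0^T\!\int_D u_\varepsilon\Big[\varepsilon\,\mathrm{div}_x\Psi\Big(\omega,t,x,\tfrac{x}{\varepsilon}\Big)+\mathrm{div}_y\Psi\Big(\omega,t,x,\tfrac{x}{\varepsilon}\Big)\Big]\,dx\,dt\,d\mathbb{P}.
\end{equation}
Both $\mathrm{div}_x\Psi$ and $\mathrm{div}_y\Psi$ belong to $L^2(\Omega\times[0,T]\times D;C_{\#}(\mathbb{T}^N))$, hence are admissible two-scale test functions. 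Passing to the limit $\varepsilon\to0$ — the term carrying the explicit prefactor $\varepsilon$ disappears because $u_\varepsilon$ stays bounded in $L^2$ — and using the two-scale convergences of $\varepsilon\nabla u_\varepsilon$ and of $u_\varepsilon$ gives
\begin{equation}\nonumber
\int_\Omega\!\int_0^T\!\int_D\!\int_{\mathbb{T}^N}\xi_0\cdot\Psi\,dy\,dx\,dt\,d\mathbb{P}
=-\int_\Omega\!\int_0^T\!\int_D\!\int_{\mathbb{T}^N} u_0\,\mathrm{div}_y\Psi\,dy\,dx\,dt\,d\mathbb{P}.
\end{equation}

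Specializing to $\Psi(\omega,t,x,y)=\phi(\omega,t,x)\Phi(y)$ with $\phi$ scalar and $\Phi$ ranging over a countable dense subset of $C^\infty_\#(\mathbb{T}^N)^N$, and invoking Fubini (for a.e. $(\omega,t,x)$ the slices of $u_0$ and $\xi_0$ lie in $L^2(\mathbb{T}^N)$), I would conclude that for a.e. $(\omega,t,x)$ the map $y\mapsto u_0(\omega,t,x,y)$ has distributional gradient $\xi_0(\omega,t,x,\cdot)\in L^2(\mathbb{T}^N)^N$; hence $u_0(\omega,t,x,\cdot)\in H^1(\mathbb{T}^N)$ with $\nabla_y u_0=\xi_0$, and $\|u_0\|^2_{L^2(\Omega\times[0,T]\times D;H^1(\mathbb{T}^N))}=\|u_0\|^2_{L^2}+\|\xi_0\|^2_{L^2}<\infty$, which is exactly the assertion.

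The steps that still require care are the density argument reducing a general admissible test function to the smooth, compactly supported class used above, and the verification that $\mathrm{div}_y\Psi$ is itself an admissible two-scale test function so that the limit passage is legitimate; both are routine given the regularity built into the definition of two-scale convergence. I expect the only real burden, compared with the deterministic version of this result, to be the bookkeeping of the probability variable $\omega$ and of the product structure of the test space $L^2(\Omega\times[0,T]\times D;C_{\#}(\mathbb{T}^N))$ through the integration by parts and the limit, rather than any genuinely new analytic difficulty.
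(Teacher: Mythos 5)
Your argument is correct and is exactly the classical proof of this compactness result (Allaire, \emph{SIAM J. Math. Anal.} 1992, adapted to the stochastic test space), which the paper itself does not reprove but defers to the cited references: two applications of the basic two-scale compactness, followed by the integration by parts $\int\varepsilon\nabla u_\varepsilon\cdot\Psi^\varepsilon=-\int u_\varepsilon(\varepsilon\,\mathrm{div}_x\Psi+\mathrm{div}_y\Psi)^\varepsilon$ to identify $\xi_0=\nabla_y u_0$. No gaps; the points you flag as needing care (density of smooth separated test functions, admissibility of $\mathrm{div}_y\Psi$) are indeed routine.
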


Notation.  For every function $\phi(x, y)$ defined on $D\times \mathbb{T}^N$, we denote by $\phi^{\varepsilon}$ the function $\phi(x, \frac{x}{\varepsilon})$.

\section{Effective Approximation}

We first present a priori estimate for the solution of the  stochastic parabolic equation (\ref{Equ.1}). For the following lemma, see \cite{Hy99}.
\begin{lemma}
Assume (H.1., H.2). For every $\varepsilon>0$, $u^0_{\varepsilon}\in H^1(D)$, and $T>0$, there exists a unique solution $u_{\varepsilon}\in L^2(\Omega;C([0,T]);L^2(D))\bigcap L^2(\Omega\times[0,T];H^1(D))$ of stochastic parabolic equation (\ref{Equ.1}) in the following sense:
\begin{equation}
(u_{\varepsilon}(t),v(t))=(u^0_{\varepsilon},v(0))-\int_0^t(\sigma^{\varepsilon}\nabla u_{\varepsilon}, \nabla v)ds+\int_0^t\left((\varepsilon^{-2}c^{\varepsilon}+d^{\varepsilon})u_{\varepsilon},v\right)ds
+\int_0^t(g^{\varepsilon}u_{\varepsilon},v)dW(s),
\end{equation}
for a.e. $\omega\in\Omega$, all $t\in[0,T]$ and for all $v\in C_c^{\infty}$. Moreover, there exists a constant $C_T$ that depends on $T$ such that
\begin{equation}
\mathbb{E}(\sup_{0\leqslant t\leqslant T}\|u_{\varepsilon}\|^2+\varepsilon^2\int_0^T\|u_{\varepsilon}\|^2_{H^1}dt)\leqslant C_T.
\end{equation}
\end{lemma}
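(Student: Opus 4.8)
\emph{Proof strategy.} The plan is to establish the two assertions---well-posedness and the energy bound---in turn, in the variational framework for linear stochastic parabolic equations; the only genuinely delicate point is the singular potential $\varepsilon^{-2}c^\varepsilon$. For existence and uniqueness I would work in the Gelfand triple $H^1_0(D)\hookrightarrow L^2(D)\hookrightarrow H^{-1}(D)$ and read (\ref{Equ.1}) as $du_\varepsilon=\big(\mathrm{div}(\sigma^\varepsilon\nabla u_\varepsilon)-(\varepsilon^{-2}c^\varepsilon+d^\varepsilon)u_\varepsilon\big)\,dt-g^\varepsilon u_\varepsilon\,dW$. By Hypothesis~H.1 the drift is a bounded linear map $H^1_0(D)\to H^{-1}(D)$, and by Hypothesis~H.2 it is coercive up to a zeroth-order shift, $\big(-\mathrm{div}(\sigma^\varepsilon\nabla v)+(\varepsilon^{-2}c^\varepsilon+d^\varepsilon)v,v\big)\ge\nu\|\nabla v\|^2-C_\varepsilon\|v\|^2$, while the noise coefficient $v\mapsto-g^\varepsilon v$ is linear, hence globally Lipschitz on $L^2(D)$, the process $W$ being one-dimensional. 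The classical existence and uniqueness theorem for coercive linear stochastic evolution equations (see \cite{Hy99}) then yields a unique $u_\varepsilon$ with the stated regularity for $u^0_\varepsilon\in H^1(D)$.

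For the energy bound I would apply the It\^{o} formula in the triple to $t\mapsto\|u_\varepsilon(t)\|^2$; since $W$ and the coefficients $c,d,g$ are real, this gives
\[ \|u_\varepsilon(t)\|^2+2\int_0^t(\sigma^\varepsilon\nabla u_\varepsilon,\nabla u_\varepsilon)\,ds=\|u^0_\varepsilon\|^2-2\int_0^t\big((\varepsilon^{-2}c^\varepsilon+d^\varepsilon)u_\varepsilon,u_\varepsilon\big)\,ds+\int_0^t\|g^\varepsilon u_\varepsilon\|^2\,ds+M_t, \]
with $M_t=-2\int_0^t(g^\varepsilon u_\varepsilon,u_\varepsilon)\,dW(s)$ a real continuous martingale. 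Hypothesis~H.2 bounds the gradient term on the left below by $2\nu\int_0^t\|\nabla u_\varepsilon\|^2\,ds$, and Hypothesis~H.1 bounds $\|g^\varepsilon u_\varepsilon\|^2\le\|g\|_\infty^2\|u_\varepsilon\|^2$ and $|(d^\varepsilon u_\varepsilon,u_\varepsilon)|\le\|d\|_\infty\|u_\varepsilon\|^2$. Taking $\sup_{s\le t}$, then expectations, and controlling the martingale by the Burkholder--Davis--Gundy inequality, $\mathbb{E}\sup_{s\le t}|M_s|\le C\,\mathbb{E}\big(\int_0^t(g^\varepsilon u_\varepsilon,u_\varepsilon)^2\,ds\big)^{1/2}\le\tfrac12\mathbb{E}\sup_{s\le t}\|u_\varepsilon\|^2+C\,\mathbb{E}\int_0^t\|u_\varepsilon\|^2\,ds$, one absorbs the term $\tfrac12\mathbb{E}\sup\|u_\varepsilon\|^2$ into the left-hand side and closes the $L^2$ estimate by Gronwall's lemma; inserting this back into the identity controls $\mathbb{E}\int_0^T(\sigma^\varepsilon\nabla u_\varepsilon,\nabla u_\varepsilon)\,ds$, and since $\|u_\varepsilon\|^2_{H^1}\le\|u_\varepsilon\|^2+\|\nabla u_\varepsilon\|^2$ this gives the $\varepsilon^2$-weighted bound.

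The hard part is that $c$ has no definite sign, so the term $-2\varepsilon^{-2}(c^\varepsilon u_\varepsilon,u_\varepsilon)$ cannot be discarded; treated crudely it forces the Gronwall constant to grow like $e^{C\varepsilon^{-2}T}$, which would make the estimate useless for the two-scale arguments of Section~3, where $C_T$ must be independent of $\varepsilon$ (Propositions~1 and~2). To get a constant depending only on $T$ one has to exploit the cell structure: by (\ref{Cell}), $\Phi^\varepsilon(x):=\psi_n(x/\varepsilon,\theta^n)\,e^{2i\pi\theta^n\cdot x/\varepsilon}$ satisfies $\big(-\mathrm{div}(\sigma^\varepsilon\nabla\cdot)+\varepsilon^{-2}c^\varepsilon\big)\Phi^\varepsilon=\varepsilon^{-2}\lambda_n(\theta^n)\,\Phi^\varepsilon$, so the factorization $u_\varepsilon=e^{i\lambda_n(\theta^n)t/\varepsilon^2}\,\Phi^\varepsilon v_\varepsilon$, with $v_\varepsilon=0$ on $\partial D$, removes the singular potential and turns (\ref{Equ.1}) into an equation for $v_\varepsilon$ whose remaining zeroth-order coefficient is bounded uniformly in $\varepsilon$; running the It\^{o}, Burkholder--Davis--Gundy and Gronwall scheme above for $v_\varepsilon$ and transforming back then gives $C_T$ independent of $\varepsilon$, the $\varepsilon^{-1}$ size of $\nabla\Phi^\varepsilon$ being responsible for the $\varepsilon^2$ weight. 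I expect the real obstacle to be making this transformation rigorous: handling the Dirichlet boundary layer created because $\Phi^\varepsilon$ does not vanish on $\partial D$, and the possible vanishing of $\psi_n$ when $n\ge2$, which obstructs a direct division by $\Phi^\varepsilon$. Under an extra spectral assumption guaranteeing $-\mathrm{div}(\sigma^\varepsilon\nabla\cdot)+\varepsilon^{-2}c^\varepsilon\ge0$ on $H^1_0(D)$, the singular term becomes favorable and the first scheme closes directly, without any transformation.
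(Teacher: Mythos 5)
The paper does not actually prove this lemma --- it is stated with the single remark ``see \cite{Hy99}'' --- so there is no internal proof to match yours against. For fixed $\varepsilon$ your first two paragraphs are exactly the standard argument that the citation stands for: the Gelfand-triple formulation with a bounded, coercive-up-to-shift drift and a linear (hence Lipschitz) noise coefficient gives existence and uniqueness, and the It\^{o}/Burkholder--Davis--Gundy/Gronwall scheme gives the energy identity and the $\varepsilon^2$-weighted $H^1$ bound. That part is correct and complete in outline.

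Your third paragraph puts your finger on the real issue, and you are right to flag it: since $c$ has no sign, the naive Gronwall constant is of order $e^{C\|c\|_\infty T/\varepsilon^2}$, whereas the proof of Theorem 1 applies the bound to $v_\varepsilon$ (with $|v_\varepsilon|=|u_\varepsilon|$) and then invokes Propositions 1--2, which require boundedness \emph{uniform in} $\varepsilon$. Neither the paper nor the cited reference addresses this, so the gap is in the paper, not in your reading of it. However, your proposed repair does not work as written. The factor $e^{i\lambda_n(\theta^n)t/\varepsilon^2}$ is unimodular, so for a \emph{parabolic} equation it cannot cancel the real singular potential: writing $u_\varepsilon=e^{i\lambda_n t/\varepsilon^2}\Phi^\varepsilon v_\varepsilon$ produces $(i+1)\varepsilon^{-2}\lambda_n\Phi^\varepsilon v_\varepsilon$ rather than a cancellation; only the real exponential $e^{-\lambda_n t/\varepsilon^2}$ removes the $\varepsilon^{-2}\lambda_n$ term, and that is a different ansatz from the one the paper (and your sketch) uses. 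On top of that come the obstructions you already name: $\Phi^\varepsilon$ does not vanish on $\partial D$, and $\psi_n$ may vanish for $n\ge 2$, so division by $\Phi^\varepsilon$ is not available. The honest conclusion is that the $\varepsilon$-uniform bound requires an additional hypothesis --- e.g.\ $c\ge 0$, or nonnegativity of $-\mathrm{div}(\sigma^\varepsilon\nabla\cdot)+\varepsilon^{-2}(c^\varepsilon-\lambda_n(\theta^n))$ on $H^1_0(D)$ as in your closing sentence --- under which your first scheme closes directly; without such a hypothesis the lemma as used in Section 3 is not justified by any argument in the paper or in your proposal.
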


\begin{theorem}
Assume (H.1.-H.3.) and that the initial data $u_{\varepsilon}^0\in H^1(D)$ is of the form
\begin{equation}
u_{\varepsilon}^0(x)=\psi_n(\frac{x}{\varepsilon},\theta^n)e^{2i\pi\frac{\theta^n\cdot x}{\varepsilon}}v^0(x),
\end{equation}
with $v^0\in H^1(D)$. Then the solution of (\ref{Equ.1}) can be expressed as
\begin{equation}
u_{\varepsilon}(t,x)=e^{i\frac{\lambda_n(\theta^n)t}{\varepsilon^2}}e^{2i\pi\frac{\theta^n\cdot x}{\varepsilon}}v_{\varepsilon}(t, x),
\end{equation}
where $v_{\varepsilon}$ two-scale converges  to $\psi_n(y,\theta^n)v(t, x)$, uniformly on compact time intervals in $\mathbb{R}^+$, and $v$ is the unique
solution of the effective stochastic parabolic equation
\begin{equation}\label{Homo.1}
\begin{cases}
\frac{\partial v}{\partial t} -{\rm div}(\sigma_n^*\nabla v)+d_n^*(x)v+g_n^*v\frac{dW(t)}{dt}=0  \quad {\rm in} \ D\times [0,T], \\
v=0 \quad {\rm on} \ \partial D\times [0,T], \\
v(0,x)=v^0(x)  \quad {\rm in} \ D.
\end{cases}
\end{equation}
Here $\sigma_n^*=\frac{1}{8\pi^2}\nabla_{\theta}\nabla_{\theta}\lambda_n(\theta^n)$, $d^*_n(x)=\int_{\mathbb{T}^N}d(x, y)|\psi_n(y)|^2dy$, and $g_n^*=\int_{\mathbb{T}^N}g(y)|\psi_n(y)|^2dy$.
\end{theorem}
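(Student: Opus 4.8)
The plan is to mimic the classical Bloch-wave factorization argument (as in Allaire--Piatnitski and related work) adapted to the stochastic setting. First I would insert the ansatz $u_\varepsilon(t,x)=e^{i\lambda_n(\theta^n)t/\varepsilon^2}e^{2i\pi\theta^n\cdot x/\varepsilon}v_\varepsilon(t,x)$ into the weak formulation of \eqref{Equ.1}. Using the identity $\nabla(e^{2i\pi\theta^n\cdot x/\varepsilon}\phi)=e^{2i\pi\theta^n\cdot x/\varepsilon}(\nabla+2i\pi\theta^n/\varepsilon)\phi$, a direct computation shows that $v_\varepsilon$ satisfies a parabolic equation whose spatial operator is exactly the $\varepsilon$-scaled version of the shifted operator appearing in the cell problem \eqref{Cell}, namely $-\varepsilon^{-2}(\varepsilon\,\text{div}+2i\pi\theta^n)(\sigma^\varepsilon(\varepsilon\nabla+2i\pi\theta^n)v_\varepsilon)+\varepsilon^{-2}c^\varepsilon v_\varepsilon-\varepsilon^{-2}\lambda_n(\theta^n)v_\varepsilon+d^\varepsilon v_\varepsilon$, plus the multiplicative noise term $g^\varepsilon v_\varepsilon\,dW$. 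Crucially the phase factor $e^{i\lambda_n(\theta^n)t/\varepsilon^2}$ is deterministic, so it commutes with the stochastic integral and only produces the $-\varepsilon^{-2}\lambda_n(\theta^n)v_\varepsilon$ shift; the noise coefficient $g^\varepsilon$ is unchanged. Lemma 1 applied to $u_\varepsilon$, together with $|e^{i\lambda_n(\theta^n)t/\varepsilon^2}|=|e^{2i\pi\theta^n\cdot x/\varepsilon}|=1$, gives the uniform bound $\mathbb{E}(\sup_t\|v_\varepsilon\|^2+\varepsilon^2\int_0^T\|v_\varepsilon\|^2_{H^1})\le C_T$, so Propositions 1 and 2 apply to $v_\varepsilon$ and $\varepsilon\nabla v_\varepsilon$.

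Next I would extract a two-scale limit: up to a subsequence $v_\varepsilon$ two-scale converges to some $v_0(\omega,t,x,y)\in L^2(\Omega\times[0,T]\times D;H^1(\mathbb{T}^N))$ and $\varepsilon\nabla v_\varepsilon\rightharpoonup\nabla_y v_0$ in the two-scale sense. The identification of $v_0$ proceeds through a hierarchy of test functions, exactly as in periodic homogenization but now keeping the probability integral throughout. Testing the equation for $v_\varepsilon$ against $\varepsilon^2\Phi(\omega,t,x,x/\varepsilon)$ and passing to the two-scale limit kills all lower-order and stochastic terms (the Itô integral term is $O(\varepsilon^2)$ in this scaling and vanishes), leaving the cell equation $-(\text{div}_y+2i\pi\theta^n)(\sigma(y)(\nabla_y+2i\pi\theta^n)v_0)+c(y)v_0=\lambda_n(\theta^n)v_0$; by Hypothesis H.3 (simplicity) this forces $v_0(\omega,t,x,y)=v(\omega,t,x)\psi_n(y,\theta^n)$ for a scalar function $v$. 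To obtain the equation for $v$ one needs the next-order corrector: I would introduce the expansion $v_\varepsilon\approx\psi_n^\varepsilon v+\varepsilon\sum_k\zeta_k^\varepsilon\partial_{x_k}v+\varepsilon^2(\cdots)$, where $\zeta_k$ solves \eqref{SolutionZeta} and the $\varepsilon^2$-corrector involves $\chi_{kl}$ from \eqref{SolutionChi}, then test \eqref{Equ.1} (or rather the $v_\varepsilon$-equation) against $\psi_n^\varepsilon\varphi(t,x)+\varepsilon\sum_k\zeta_k^\varepsilon\partial_{x_k}\varphi$ with $\varphi\in C_c^\infty$. Collecting the $O(1)$ terms, using the equations \eqref{SolutionZeta}--\eqref{SolutionChi} satisfied by $\zeta_k,\chi_{kl}$ and the compatibility condition that encodes the Hessian formula, the divergence part produces $-\text{div}(\sigma_n^*\nabla v)$ with $\sigma_n^*=\frac{1}{8\pi^2}\nabla_\theta\nabla_\theta\lambda_n(\theta^n)$; the term $d^\varepsilon v_\varepsilon$ two-scale converges against $|\psi_n|^2$ to give $d_n^*(x)v$; and the stochastic term $\int_0^t g^\varepsilon v_\varepsilon\,dW$ converges to $\int_0^t g_n^* v\,dW$ because $g^\varepsilon v_\varepsilon$ two-scale converges to $g(y)\psi_n(y)v$ and integration against $\psi_n(y)$ (from the test function $\psi_n^\varepsilon\varphi$) yields the average $g_n^*=\int_{\mathbb{T}^N}g|\psi_n|^2\,dy$. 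This identifies the limit as the weak solution of \eqref{Homo.1}; well-posedness of \eqref{Homo.1} (again Lemma 1, now with constant coefficients $\sigma_n^*$ etc., or standard SPDE theory) gives uniqueness of the limit, hence convergence of the whole sequence and uniformity on compact time intervals.

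The main obstacle I anticipate is the rigorous passage to the limit in the stochastic integral term. Two-scale convergence is only a weak-type convergence in $(\omega,t,x)$, whereas the Itô integral $v_\varepsilon\mapsto\int_0^t g^\varepsilon v_\varepsilon\,dW(s)$ is not weakly continuous in general; one must exploit that $g^\varepsilon$ is deterministic and that the test functions in the weak formulation are deterministic (or at least $\mathcal{F}_s$-adapted), so that $\mathbb{E}\int_0^T\big(\int_0^t g^\varepsilon v_\varepsilon\,dW\big)\varphi\,dt$ can be handled by the Itô isometry and a stochastic Fubini argument, reducing everything to the (weak) convergence of $g^\varepsilon v_\varepsilon$ tested against $\mathcal{F}_s$-progressively measurable two-scale test functions. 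Making this compatible with the Definition 1 notion of two-scale convergence — which integrates over $\Omega$ — requires a little care; one standard route is to pass to the limit in the martingale problem / weak formulation integrated against $\mathbb{E}[\,\cdot\,\xi]$ for arbitrary bounded $\mathcal{F}_t$-measurable $\xi$, or equivalently to use that the pair $(v_\varepsilon,\int_0^\cdot g^\varepsilon v_\varepsilon dW)$ is tight and any limit point solves the martingale problem associated with \eqref{Homo.1}. A secondary technical point is justifying that the correctors $\zeta_k^\varepsilon,\chi_{kl}^\varepsilon$, which are only $L^\infty$ (since $\sigma,c\in L^\infty$), can legitimately be used inside the weak formulation together with the oscillating test functions; this is handled by a standard density/regularization argument and the uniform bounds from Lemma 1, exactly as in the deterministic case.
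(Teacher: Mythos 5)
Your proposal follows essentially the same route as the paper's proof: factor out the oscillating phase to define $v_{\varepsilon}$, transfer the a priori bounds via $|v_{\varepsilon}|=|u_{\varepsilon}|$ and extract a two-scale limit, test with $\varepsilon^2\phi(\omega,t,x,\frac{x}{\varepsilon})$ to recover the cell equation and identify the limit as $v(\omega,t,x)\psi_n(y,\theta^n)$ by simplicity of the eigenvalue, then test with $\psi_n^{\varepsilon}\phi+\varepsilon\sum_k\zeta_k^{\varepsilon}\partial_{x_k}\phi$ so that the singular terms cancel against the equations for $\psi_n$ and $\zeta_k$, and finally use the compatibility condition for $\chi_{kl}$ to identify $\sigma_n^*=\frac{1}{8\pi^2}\nabla_{\theta}\nabla_{\theta}\lambda_n(\theta^n)$. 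The only substantive difference is that you explicitly flag and sketch a justification for passing to the limit in the It\^o integral term, a point the paper's proof passes over without comment.
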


\begin{proof}
Define a sequence $v_{\varepsilon}$ by
\begin{equation}\nonumber
v_{\varepsilon}(t,x)=u_{\varepsilon}(t, x)e^{-i\frac{\lambda_n(\theta^n)t}{\varepsilon^2}}e^{-2i\pi\frac{\theta^n\cdot x}{\varepsilon}}.
\end{equation}
Since $|v_{\varepsilon}|=|u_{\varepsilon}|$, by Lemma 1, we have
\begin{equation}
\mathbb{E}(\sup_{0\leqslant t\leqslant T}\|v_{\varepsilon}\|^2+\varepsilon^2\int_0^T\|v_{\varepsilon}\|^2_{H^1}dt)\leqslant C_T.
\end{equation}
By applying Proposition 2, up to a subsequence, there exists a limit $v^*(\omega, t, x, y)\in L^2(\Omega\times[0, T]\times D; H^1(\mathbb{T}^N))$ such that $v_{\varepsilon}$ two-scale converges to $v^*$ and $\varepsilon\nabla v_{\varepsilon}$ two-scale converges to $\nabla_yv^*$. Similarly, by definition of the initial data and Proposition 1, $v_{\varepsilon}(0, x)$ two-scale converges to $\psi_n(y, \theta^n)v^0(x)$.\\
{\it Step 1.}\ We multiply (\ref{Equ.1}) by the complex conjugate of $\varepsilon^2\phi(\omega, t, x, \frac{x}{\varepsilon}) e^{i\frac{\lambda_n(\theta^n)t}{\varepsilon^2}}e^{2i\pi\frac{\theta^n\cdot x}{\varepsilon}}$, where $\phi(\omega, t, x, \frac{x}{\varepsilon})$ is a smooth test function defined on $\Omega\times[0, T]\times D\times\mathbb{T}^N$, with compact support in $[0, T]\times D$, and with values in $\mathbb{C}$. Integrate with respect to $\omega\in\Omega$ and $t\in[0,T]$, and get
\begin{equation}
\begin{split}
&\varepsilon^2\int_{\Omega}\int_Du^0_{\varepsilon}\bar{\phi}^{\varepsilon}e^{-2i\pi\frac{\theta^n\cdot x}{\varepsilon}}dxd\mathbb{P}-\varepsilon^2\int_{\Omega}\int_0^T\int_Dv_{\varepsilon}\frac{\partial\bar{\phi}^{\varepsilon}}{\partial t}dxdtd\mathbb{P}\\
&+\int_{\Omega}\int_0^T\int_D\sigma^{\varepsilon}(\varepsilon\nabla+2i\pi\theta^n)v_{\varepsilon}\cdot(\varepsilon\nabla-2i\pi\theta^n)\bar{\phi}^{\varepsilon}dxdtd\mathbb{P}\\
&+\int_{\Omega}\int_0^T\int_D(c^{\varepsilon}-\lambda_n(\theta^n)+\varepsilon^2d^{\varepsilon})v_{\varepsilon}\bar{\phi}^{\varepsilon}dxdtd\mathbb{P}+\int_{\Omega}\int_0^T\int_D\varepsilon^2g^{\varepsilon}v^{\varepsilon}\bar{\phi}^{\varepsilon}dxdW(t)d\mathbb{P}=0.\\
\end{split}
\end{equation}
Passing to the two-scale limit term by term, we obtain
\begin{equation}
-(\text{div}_y+2i\pi\theta^n)\left(\sigma(y)(\nabla_y+2i\pi\theta^n)v^*\right)+c(y)v^*=\lambda_n(\theta^n)v^* \quad \text{in}\ \mathbb{T}^N,
\end{equation}
for a.e. $\omega\in\Omega$. By the simplicity of $\lambda_n(\theta^n)$ of Hypothesis H.3., we know that there exists a scalar function $v(\omega, t, x)\in L^2(\Omega\times[0, T]\times D)$ such that
\begin{equation}
v^*(\omega, t, x, y)=v(\omega, t, x)\psi_n(y, \theta^n).\\
\end{equation}
{\it Step 2.}\ We multiply (\ref{Equ.1}) by the complex conjugate of
 \begin{equation}\label{PsiConj.}
\Psi_{\varepsilon}=e^{i\frac{\lambda_n(\theta^n)t}{\varepsilon^2}}e^{2i\pi\frac{\theta^n\cdot x}{\varepsilon}}\left(\psi_n(\frac{x}{\varepsilon}, \theta^n)\phi(\omega, t, x)+\varepsilon\sum_{k=1}^N\frac{\partial \phi}{\partial x_k}(\omega, t, x)\zeta_k(\frac{x}{\varepsilon})\right),
\end{equation}
where $\phi(\omega, t, x)$ is a smooth compactly supported test function defined from $[0, T]\times D$ to $\mathbb{C}$, and $\zeta_k(y)$ is the solution of (\ref{SolutionZeta}). Then we obtain
\begin{equation}\label{Variation1}
\begin{split}
&\int_{\Omega}\int_Du^0_{\varepsilon}\bar{\Psi}_{\varepsilon}(t=0)dxd\mathbb{P}-\int_{\Omega}\int_0^T\int_Du_{\varepsilon}\frac{\partial\bar{\Psi}}{\partial t}dxdtd\mathbb{P}+\int_{\Omega}\int_0^T\int_D\sigma^{\varepsilon}\nabla u_{\varepsilon} \cdot\nabla\bar{\Psi}_{\varepsilon}dxdtd\mathbb{P}\\
&+\frac{1}{\varepsilon^2}\int_{\Omega}\int_0^T\int_Dc^{\varepsilon}u_{\varepsilon}\bar{\Psi}_{\varepsilon}dxdtd\mathbb{P}+\int_{\Omega}\int_0^T\int_Dd^{\varepsilon}u_{\varepsilon}\bar{\Psi}_{\varepsilon}dxdtd\mathbb{P}+\int_{\Omega}\int_0^T\int_Dg^{\varepsilon}u_{\varepsilon}\bar{\Psi}_{\varepsilon}dxdW(t)d\mathbb{P} =0.\\
\end{split}
\end{equation}
The summation convention that repeated indices indicate summation from 1 to $N$ is followed here. According to (\ref{PsiConj.}), we obtain
\begin{equation}\label{Variation2}
\begin{split}
&\int_{\Omega}\int_Du^0_{\varepsilon}\bar{\Psi}_{\varepsilon}(t=0)dxd\mathbb{P}-\int_{\Omega}\int_0^T\int_Dv_{\varepsilon}(\bar{\psi}_n^{\varepsilon}\frac{\partial\bar{\phi}}{\partial t}+\varepsilon\frac{\partial^2\bar{\phi}}{\partial x_k\partial t}\bar{\zeta_k}^{\varepsilon})dxdtd\mathbb{P}\\
&+\int_{\Omega}\int_0^T\int_D\sigma^{\varepsilon}\nabla u_{\varepsilon} \cdot\nabla\bar{\Psi}_{\varepsilon}dxdtd\mathbb{P}+\frac{1}{\varepsilon^2}\int_{\Omega}\int_0^T\int_D(c^{\varepsilon}-\lambda_n(\theta^n))v_{\varepsilon}\bar{\psi}_n^{\varepsilon}\bar{\phi}dxdtd\mathbb{P}\\
&+\frac{1}{\varepsilon}\int_{\Omega}\int_0^T\int_D(c^{\varepsilon}-\lambda_n(\theta^n))v_{\varepsilon}\frac{\partial\bar{\phi}}{\partial x_k}\bar{\zeta_k}^{\varepsilon}dxdtd\mathbb{P}+\int_{\Omega}\int_0^T\int_Dd^{\varepsilon}v_{\varepsilon}(\bar{\psi}_n^{\varepsilon}\bar{\phi}+\varepsilon\frac{\partial\bar{\phi}}{\partial x_k}\bar{\zeta_k}^{\varepsilon})dxdtd\mathbb{P}\\
&+\int_{\Omega}\int_0^T\int_Dg^{\varepsilon}v_{\varepsilon}(\bar{\psi}_n^{\varepsilon}\bar{\phi}+\varepsilon\frac{\partial\bar{\phi}}{\partial x_k}\bar{\zeta_k}^{\varepsilon})dxdW(t)d\mathbb{P} =0.\\
\end{split}
\end{equation}
After some algebra we find that
\begin{equation}\label{Div}
\begin{split}
&\int_D\sigma^{\varepsilon}\nabla u_{\varepsilon} \cdot\nabla\bar{\Psi}_{\varepsilon}dx=\int_D\sigma^{\varepsilon}(\nabla+2i\pi\frac{\theta^n}{\varepsilon})(\bar{\phi} v_{\varepsilon})\cdot(\nabla-2i\pi\frac{\theta^n}{\varepsilon})\bar{\psi}^{\varepsilon}_ndx\\
&+\varepsilon\int_D\sigma^{\varepsilon} (\nabla+2i\pi\frac{\theta^n}{\varepsilon})(\frac{\partial\bar{\phi}}{\partial x_k} v_{\varepsilon})\cdot(\nabla-2i\pi\frac{\theta^n}{\varepsilon})\bar{\zeta_k}^{\varepsilon}dx-\int_D\sigma^{\varepsilon}e_k\frac{\partial\bar{\phi}}{\partial x_k} v_{\varepsilon}\cdot(\nabla-2i\pi\frac{\theta^n}{\varepsilon})\bar{\psi}^{\varepsilon}_ndx\\
&+\int_D\sigma^{\varepsilon}(\nabla+2i\pi\frac{\theta^n}{\varepsilon})(\frac{\partial\bar{\phi}}{\partial x_k} v_{\varepsilon})\cdot e_k\bar{\psi}^{\varepsilon}_ndx-\int_D\sigma^{\varepsilon}v_{\varepsilon}\nabla\frac{\partial\bar{\phi}}{\partial x_k}\cdot e_k\bar{\psi}^{\varepsilon}_ndx\\
&-\int_D\sigma^{\varepsilon}v_{\varepsilon}\nabla\frac{\partial\bar{\phi}}{\partial x_k}\cdot(\varepsilon\nabla-2i\pi\theta^n)\bar{\zeta_k}^{\varepsilon}dx+\int_D\sigma^{\varepsilon}\bar{\zeta_k}^{\varepsilon}(\varepsilon\nabla+2i\pi\theta^n)v_{\varepsilon}\cdot\nabla\frac{\partial\bar{\phi}}{\partial x_k}dx.\\
\end{split}
\end{equation}
For every smooth compactly supported test function $\Phi$, we deduce from the definition of $\psi_n$ that
\begin{equation}\label{PsiDef.}
\int_D\sigma^{\varepsilon}(\nabla+2i\pi\frac{\theta^n}{\varepsilon})\psi_n^{\varepsilon}\cdot(\nabla-2i\pi\frac{\theta^n}{\varepsilon})\bar{\Phi}dx+\frac{1}{\varepsilon^2}\int_D(c^{\varepsilon}-\lambda_n(\theta^n))\psi_n^{\varepsilon}\bar{\Phi}dx=0,
\end{equation}
and from the definition of $\zeta_k$,
\begin{equation}\label{ZetaDef.}
\begin{split}
&\int_D\sigma^{\varepsilon}(\nabla+2i\pi\frac{\theta^n}{\varepsilon})\zeta_k^{\varepsilon}\cdot(\nabla-2i\pi\frac{\theta^n}{\varepsilon})\bar{\Phi}dx+\frac{1}{\varepsilon^2}\int_D(c^{\varepsilon}-\lambda_n(\theta^n))\zeta_k^{\varepsilon}\bar{\Phi}dx=\\
&\varepsilon^{-1}\int_D\sigma^{\varepsilon}(\nabla+2i\pi\frac{\theta^n}{\varepsilon})\psi_n^{\varepsilon}\cdot e_k\bar{\Phi}dx-\varepsilon^{-1}\int_D\sigma^{\varepsilon}e_k\psi_n^{\varepsilon}\cdot(\nabla-2i\pi\frac{\theta^n}{\varepsilon})\bar{\Phi}dx.\\
\end{split}
\end{equation}

Combining (\ref{Div}) with the other terms of (\ref{Variation2}), we find that the first term of its right-hand side cancels out because of (\ref{PsiDef.}) with $\Phi=\bar{\phi}v_{\varepsilon}$, and the next three terms cancel out because of (\ref{ZetaDef.}) with $\Phi=\frac{\partial\bar{\phi}}{\partial x_k}v_{\varepsilon}$.
Finally, (\ref{Equ.1}) multiplied by $\bar{\Psi}_{\varepsilon}$ yields 
\begin{equation}\label{Variation3}
\begin{split}
&\int_{\Omega}\int_Du^0_{\varepsilon}\bar{\Psi}_{\varepsilon}(t=0)dxd\mathbb{P}-\int_{\Omega}\int_0^T\int_Dv_{\varepsilon}(\bar{\psi}_n^{\varepsilon}\frac{\partial\bar{\phi}}{\partial t}+\varepsilon\frac{\partial^2\bar{\phi}}{\partial x_k\partial t}\bar{\zeta_k}^{\varepsilon})dxdtd\mathbb{P}\\
&-\int_{\Omega}\int_0^T\int_D\sigma^{\varepsilon}v_{\varepsilon}\nabla\frac{\partial\bar{\phi}}{\partial x_k}\cdot e_k\bar{\psi}^{\varepsilon}_ndxdtd\mathbb{P}-\int_{\Omega}\int_0^T\int_D\sigma^{\varepsilon}v_{\varepsilon}\nabla\frac{\partial\bar{\phi}}{\partial x_k}\cdot(\varepsilon\nabla-2i\pi\theta^n)\bar{\zeta_k}^{\varepsilon}dxdtd\mathbb{P}\\
&+\int_{\Omega}\int_0^T\int_D\sigma^{\varepsilon}\bar{\zeta_k}^{\varepsilon}(\varepsilon\nabla+2i\pi\theta^n)v_{\varepsilon}\cdot\nabla\frac{\partial\bar{\phi}}{\partial x_k}dxdtd\mathbb{P}+\int_{\Omega}\int_0^T\int_Dd^{\varepsilon}v_{\varepsilon}(\bar{\psi}_n^{\varepsilon}\bar{\phi}+\varepsilon\frac{\partial\bar{\phi}}{\partial x_k}\bar{\zeta_k}^{\varepsilon})dxdtd\mathbb{P}\\
&+\int_{\Omega}\int_0^T\int_Dg^{\varepsilon}v_{\varepsilon}(\bar{\psi}_n^{\varepsilon}\bar{\phi}+\varepsilon\frac{\partial\bar{\phi}}{\partial x_k}\bar{\zeta_k}^{\varepsilon})dxdW(t)d\mathbb{P} =0.\\
\end{split}
\end{equation}
Passing to the two-scale limit in each term of (\ref{Variation3}) gives
\begin{equation}\nonumber
\begin{split}
&\int_{\Omega}\int_D\int_{\mathbb{T}^N}\psi_nv^0\bar{\psi}_n\bar{\phi}(t=0)dydxd\mathbb{P}-\int_{\Omega}\int_0^T\int_D\int_{\mathbb{T}^N}\psi_nv\bar{\psi}_n\frac{\partial\bar{\phi}}{\partial t}dydxdtd\mathbb{P}\\
&-\int_{\Omega}\int_0^T\int_D\int_{\mathbb{T}^N}\sigma\psi_nv\nabla\frac{\partial\bar{\phi}}{\partial x_k}\cdot e_k\bar{\psi}_ndydxdtd\mathbb{P}-\int_{\Omega}\int_0^T\int_D\int_{\mathbb{T}^N}\sigma\psi_nv\nabla\frac{\partial\bar{\phi}}{\partial x_k}\cdot(\nabla_y-2i\pi\theta^n)\bar{\zeta_k}dydxdtd\mathbb{P}\\
&+\int_{\Omega}\int_0^T\int_D\int_{\mathbb{T}^N}\sigma\bar{\zeta_k}(\nabla_y+2i\pi\theta^n)\psi_nv\cdot\nabla\frac{\partial\bar{\phi}}{\partial x_k}dydxdtd\mathbb{P}+\int_{\Omega}\int_0^T\int_D\int_{\mathbb{T}^N}d(x, y)\psi_nv\bar{\psi}_n\bar{\phi}dydxdtd\mathbb{P}\\
&+\int_{\Omega}\int_0^T\int_D\int_{\mathbb{T}^N}g(y)\psi_nv\bar{\psi}_n\bar{\phi}dydxdW(t)d\mathbb{P} =0.\\
\end{split}
\end{equation}
Recalling the normalization $\int_{\mathbb{T}^N}|\psi_n|^2dy=1$, and introducing
\begin{equation}\label{Sigma}
\begin{split}
(\sigma_n^*)_{jk}&=\frac{1}{2}\int_{\mathbb{T}^N}(\sigma\psi_ne_j\cdot e_k\bar{\psi}_n+\sigma\psi_ne_k\cdot e_j\bar{\psi}_n+\sigma\psi_ne_j\cdot(\nabla_y-2i\pi\theta^n)\bar{\zeta_k}\\
&+\sigma\psi_ne_k\cdot(\nabla_y-2i\pi\theta^n)\bar{\zeta_j}-\sigma\bar{\zeta_k}(\nabla_y+2i\pi\theta^n)\psi_n\cdot e_j-\sigma\bar{\zeta_j}(\nabla_y+2i\pi\theta^n)\psi_n\cdot e_k)dy,
\end{split}
\end{equation}
and $d^*_n(x)=\int_{\mathbb{T}^N}d(x, y)|\psi_n(y)|^2dy$, $g_n^*=\int_{\mathbb{T}^N}g(y)|\psi_n(y)|^2dy$, the limit equation is equivalent to
\begin{equation}
\begin{split}
&\int_{\Omega}\int_Dv^0\bar{\phi}dxd\mathbb{P}-\int_{\Omega}\int_0^T\int_Dv\frac{\partial\bar{\phi}}{\partial t}dxdtd\mathbb{P}-\int_{\Omega}\int_0^T\int_D\sigma_n^*v\cdot\nabla\nabla\bar{\phi}dxdtd\mathbb{P}\\
&+\int_{\Omega}\int_0^T\int_Dd^*(x)v\bar{\phi}dxdtd\mathbb{P}+\int_{\Omega}\int_0^T\int_Dg_n^*v\bar{\phi}dxdW(t)d\mathbb{P}=0,\\
\end{split}
\end{equation}
which is a weak form of the homogenized equation (\ref{Homo.1}). The compatibility condition of (\ref{SolutionChi}) for the second derivative of $\psi_n$ shows that $\sigma_n^*$, defined by (\ref{Sigma}), is indeed equal to $\frac{1}{8\pi^2}\nabla_{\theta}\nabla_{\theta}\lambda_n(\theta^n)$.

This completes the proof.
\end{proof}

\section*{Acknowledgment}
 This work was partly supported by the NSFC grants 11531006 and 11771449.

\section*{References}

\end{document}